\theoremstyle{plain}
\newtheorem{theorem}{Theorem}
\newtheorem{lemma}{Lemma}
\theoremstyle{definition}
\newtheorem{definition}{Definition}
\newtheorem{example}{Example}
\newtheorem{remark}{Remark}
\newcommand\blfootnote[1]{%
	\begingroup
	\renewcommand\thefootnote{}\footnote{#1}%
	\addtocounter{footnote}{-1}%
	\endgroup
}
\begin{document}	
	
\title{Stability criteria of nonlinear generalized proportional\\ 
fractional delayed systems\blfootnote{This is a preprint version of a paper 
whose final form is published in 'Mathematical Analysis: Theory and Applications'.}}
	
\author{Hanaa Zitane$^{1,2}$\\
\url{https://orcid.org/0000-0002-7635-9963}\\	
\texttt{h.zitane@ua.pt,~h.zitane@uae.ac.ma}
\and Delfim F. M. Torres$^{1,}$\thanks{Corresponding author.}\\
\url{https://orcid.org/0000-0001-8641-2505}\\
\texttt{delfim@ua.pt}}

\date{$^{1}$\text{Center for Research and Development in Mathematics and Applications (CIDMA),}  
Department of Mathematics, University of Aveiro, 3810-193 Aveiro, Portugal\\[0.3cm] 
$^{2}$Department of Mathematics, Faculty of Sciences,\\
University of Abdelmalek Essaadi, B.P.~2121, Tetouan, Morocco}
	
\maketitle	
	
	
\begin{abstract}
This work deals with the finite time stability of generalized 
proportional fractional systems with time delay. First, 
based on the generalized proportional Gr\"onwall inequality, 
we derive an explicit criterion that enables the system trajectories 
to stay within a priori given sets during a pre-specified time interval, 
in terms of the Mittag-Leffler function. Then, we investigate the finite 
time stability of nonlinear nonhomogeneous delayed systems by means 
of an approach based on H\"older's and Jensen's inequalities. 
Numerical applications are presented to illustrate the validity 
and feasibility of the developed results.

\medskip
		
\noindent \textbf{Keywords:} 
finite time stability, generalized proportional fractional derivatives,  
Gr\"onwall inequality, 
time delay. 
		
\medskip
		
\noindent \textbf{2020 Mathematics Subject Classification}: 
26A33, 34A08, 34A34, 34D20, 34K20.
\end{abstract}
	
\section{Introduction}

With the rapid development and progress of fractional calculus, 
there has been an increasing interest in the investigation 
of time delay fractional systems, since they allow 
to describe systems in which the rate of change depends 
not only on the present and delayed state but also on the 
whole past memory, as well as, they have diverse applications 
in science and engineering \cite{Gao,LiSun}. During the past decades, 
the finite time stability\index{finite time stability}  
of fractional delay systems has been widely 
studied \cite{Arthi,Du,YangWu,MyID:549}. The main approaches 
to analyze this kind of stability include 
the fractional Halanay inequality \cite{Nguyena}, 
H\"{o}lder's inequality \cite{DuJia}, 
Gr\"onwall's inequality \cite{Lazarevi,YangZhang}, 
delayed Mittag-Leffler type matrix functions \cite{Li}, 
and the weighted integral inequality \cite{Jia}.

Here, we consider the finite time stability\index{finite time stability} problem of a class 
of nonlinear generalized proportional fractional systems (GPFSs) 
with time delay:\index{time delay}
\begin{equation}
\label{system2}
\left\{
\begin{array}{ll}
{}^C \!D_{0}^{\alpha,\mu}y(t)
=\exp\left(\frac{\mu-1}{\mu}t\right) \left[Ay(t)+By(t-\tau)+f(t,y(t),y(t-\tau))\right], 
& t\in[0,T],\\
y(t)=\phi(t),  & t\in[-\tau,0],
\end{array}
\right.
\end{equation}
where $A$ and $B$ are constant $n\times n$ matrices, 
$T>0$ is a real number, $\tau>0$ is a time delay,\index{time delay} 
$\phi(\cdot)$ is a continuous function on $[-\tau,0]$,
$f: [0,T]\times\mathbb{R}^{n}\times\mathbb{R}^{n}\longrightarrow \mathbb{R}^{n}$  
is a given nonlinear continuous function with $f(t,0,0)=0$, 
and ${}^C \!D_{0}^{\alpha,\mu}$ is the Caputo generalized proportional 
fractional derivative (GPFD) as suggested by Jarad et al.~\cite{GPF}. 
This type of fractional operators has different interesting advantages, 
such as: it preserves the semigroup property, 
which is essential for solving certain complicated fractional systems; 
it possesses a nonlocal character; and tends to the original function 
and its derivative upon a limiting process \cite{Gronwall}. Moreover, 
it provides an undeviating generalization to the existing Caputo fractional derivative.
For further information about the GPFD, differential systems evolving under GPFDs 
and their applications, we refer the reader to \cite{Gronwall,Boucenna,Farman,Laadjal} 
and references therein.

The stability of GPFSs, with and without delay, has been seldom investigated 
\cite{Almeida,Bohner,LaadjalJ}. In \cite{Almeida}, the second method 
of Lyapunov is used to analyze the exponential and the Mittag-Leffler 
stability of fractional order systems evolving GPFDs. Also, in \cite{LaadjalJ}, 
sufficient conditions that ensure the stability of nonlinear hybrid fractional 
integro-differential equations with Dirichlet boundary conditions are provided
in the sense of Ulam--Hyers--Rassias. For the stability of time delay systems, 
it has been tackled only by means of the Lyapunov approach, where quadratic Lyapunov 
functions and their GPFD are applied to investigate the exponential and asymptotic 
stability of a scalar nonlinear integro-differential GPFS with bounded delays \cite{Bohner}.
This serves as inspiration for our current work.

We study the finite time stability problem for a class of nonlinear GPFSs 
with time delay.\index{time delay} The main contributions of this work can be summarized
as follows:
\begin{description}
\item[(i)] A simple sufficient condition is established for the stability of 
system~\eqref{system2} over a finite time interval in the homogeneous case 
by means of the Bellman--Gr\"onwall approach.

\item[(ii)] An explicit delay-dependent criterion is obtained to ensure 
the finite time stability of nonlinear time delay GPFSs by using 
H\"older's and Jensen's inequalities.

\item[(ii)] Compared to the existing work \cite{Bohner}, here we consider 
a more general and multi dimension class of GPFSs with delays. 
Furthermore, the stability is investigated based on non-Lyapunov approaches.		
\end{description}


\section{Preliminaries}
\label{sec:2}

Throughout the text, $\Arrowvert \cdot \Arrowvert$ stands for 
the Euclidean norm and the spectral norm for a vector and a matrix, 
respectively. Also, the norm function with the initial condition is given by 
$$
\Arrowvert \phi \Arrowvert_{C}
= \underset{t \in [-\tau,0]}{\sup} \Arrowvert \phi (t)\Arrowvert.
$$ 

We begin by recalling the definitions of generalized proportional fractional operators 
that are employed throughout the manuscript. Further information about the generalized 
proportional calculus can be found in \cite{Gronwall,GPF,Laadjal}. 

\begin{definition}[See \cite{GPF}]
Let $\alpha>0$, $\mu \in(0,1]$, and $h \in C^{1}([0,T], \mathbb{R})$. 
The left generalized proportional fractional integral\index{generalized proportional fractional integral} 
\cite{Gronwall} of function $h$ is defined by
$$
{}^C\!I_{0}^{\alpha,\mu}h(t)=\dfrac{1}{\mu^{\alpha}\Gamma(\alpha)}
\int_{0}^{t}\exp\left(\frac{\mu-1}{\mu}(t-s)\right) (t-s)^{\alpha-1}h(s)\, \mathrm{d}s,
$$
where $\Gamma(\cdot)$ is Euler's Gamma function \cite{Sabatier}. 
\end{definition}

\begin{definition}[See \cite{GPF}] 
\label{GPF}
Let $\alpha\in(0,1)$ and $\mu \in(0,1]$. The left generalized
proportional fractional order derivative\index{generalized proportional fractional derivative}  
of a function 
$h \in C^{1}([0,T], \mathbb{R})$ is given by
$$
{}^C\!D_{0}^{\alpha,\mu}h(t)=\dfrac{1}{\mu^{1-\alpha}\Gamma(1-\alpha)}
\int_{0}^{t}\exp\left(\frac{\mu-1}{\mu}(t-s)\right) 
(t-s)^{-\alpha}D^{1,\mu}h(s)\, \mathrm{d}s,
$$
where $D^{1,\mu}h(t)=(1-\mu)h(t)+\mu h'(t)$.
\end{definition}

\begin{remark}
When $\mu=1$, the GPFD reduces to the Caputo 
fractional derivative~\cite{Sabatier}.	
\end{remark}

The existence result of a solution to system~\eqref{system2} 
is given by the following lemma.

\begin{lemma}[See \cite{Gronwall}] 
\label{Lemma:1}
A function $y : [-\tau, T]\longrightarrow\mathbb{R}^{n}$ 
is a mild solution\index{solution} of system \eqref{system2} 
if, and only if, it satisfies
\begin{equation}
\label{system-2}
\left\{
\begin{array}{ll}
y(t)=\phi(0)\exp\left(\frac{\mu-1}{\mu}t\right) 
+{}^C \!I_{0}^{\alpha,\mu}
\exp\left(\frac{\mu-1}{\mu}s\right) \left[  
Ay(t)+By(t-\tau)+f(t,y(t),y(t-\tau))\right], 
& t\in[0,T],\\
y(t)=\phi(t),  & t\in[-\tau,0].
\end{array}
\right.
\end{equation}
Moreover, if for any functions 
$y,z : [-\tau, T]\longrightarrow\mathbb{R}^{n}$
there exists a constant $L_{f}>0$ such that
\begin{equation}
\label{condLipch}
\Arrowvert f(t,y(t),y(t-\tau))- f(t,z(t),z(t-\tau))  \Arrowvert 
\leq L_{f}  \left(\Arrowvert  y(t)-z(t) \Arrowvert 
+ \Arrowvert  y(t-\tau)-z(t-\tau) \Arrowvert
\right),\quad t\in[0,T],
\end{equation}
then system~\eqref{system2} has a unique mild solution.
\end{lemma}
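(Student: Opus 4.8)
The plan is to establish the two assertions in turn: first the equivalence between mild solutions of \eqref{system2} and solutions of the integral equation \eqref{system-2}, and then existence and uniqueness under the Lipschitz hypothesis \eqref{condLipch}. The engine for the first part is the fact that the generalized proportional fractional integral and the Caputo GPFD are mutually inverse, and the engine for the second is the Banach fixed-point theorem.

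For the equivalence, I would rely on the composition identity
$$
{}^C\!I_{0}^{\alpha,\mu}\,{}^C\!D_{0}^{\alpha,\mu}h(t)
=h(t)-h(0)\exp\left(\frac{\mu-1}{\mu}t\right),
\qquad h\in C^{1}([0,T],\mathbb{R}),\ \alpha\in(0,1).
$$
Granting it, apply ${}^C\!I_{0}^{\alpha,\mu}$ to both sides of the differential equation in \eqref{system2} over $[0,T]$: since $y(0)=\phi(0)$, the left-hand side becomes $y(t)-\phi(0)\exp(\frac{\mu-1}{\mu}t)$, while the right-hand side is exactly the integral term in \eqref{system-2}; rearranging yields the integral form. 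Conversely, applying ${}^C\!D_{0}^{\alpha,\mu}$ to \eqref{system-2} annihilates the term $\phi(0)\exp(\frac{\mu-1}{\mu}t)$ and inverts the integral operator, recovering \eqref{system2}. The piece on $[-\tau,0]$ is common to both formulations and requires no argument.

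For existence and uniqueness, recast \eqref{system-2} as a fixed-point problem on the Banach space $C([-\tau,T],\mathbb{R}^{n})$. Define $\mathcal{T}$ by $(\mathcal{T}y)(t)=\phi(t)$ on $[-\tau,0]$ and
$$
(\mathcal{T}y)(t)=\phi(0)\exp\left(\frac{\mu-1}{\mu}t\right)
+{}^C\!I_{0}^{\alpha,\mu}\exp\left(\frac{\mu-1}{\mu}s\right)
\bigl[Ay(t)+By(t-\tau)+f(t,y(t),y(t-\tau))\bigr]
$$
on $[0,T]$, so that mild solutions are precisely the fixed points of $\mathcal{T}$. Using \eqref{condLipch} and the finiteness of $\Arrowvert A\Arrowvert$, $\Arrowvert B\Arrowvert$ in the spectral norm, I would bound $\Arrowvert(\mathcal{T}y)(t)-(\mathcal{T}z)(t)\Arrowvert$ by a constant multiple of ${}^C\!I_{0}^{\alpha,\mu}$ applied to the supremum of $\Arrowvert y-z\Arrowvert$. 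To convert this estimate into a genuine contraction over all of $[0,T]$, I would either endow $C([-\tau,T],\mathbb{R}^{n})$ with a weighted (Bielecki-type) norm that absorbs the constant through the exponential/Mittag-Leffler factor, or show that a high enough iterate $\mathcal{T}^{k}$ is a contraction; in either case the Banach fixed-point theorem delivers a unique fixed point, hence a unique mild solution.

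The main obstacle is the composition identity itself. Verifying it means writing out the two exponentially weighted convolution kernels, combining the exponentials via $\exp(\frac{\mu-1}{\mu}(t-r))\exp(\frac{\mu-1}{\mu}(r-s))=\exp(\frac{\mu-1}{\mu}(t-s))$, interchanging the order of integration by Fubini, and collapsing the inner integral with the Beta identity $\int_{s}^{t}(t-r)^{\alpha-1}(r-s)^{-\alpha}\,\mathrm{d}r=\Gamma(\alpha)\Gamma(1-\alpha)$. One is then left with $\tfrac{1}{\mu}\int_{0}^{t}\exp(\frac{\mu-1}{\mu}(t-s))\,D^{1,\mu}h(s)\,\mathrm{d}s$, whose integrand, after substituting $D^{1,\mu}h(s)=(1-\mu)h(s)+\mu h'(s)$, is exactly the derivative $\frac{\mathrm{d}}{\mathrm{d}s}\bigl[\exp(\frac{\mu-1}{\mu}(t-s))h(s)\bigr]$, so a single integration produces the boundary terms. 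A secondary difficulty is controlling the contraction constant uniformly on $[0,T]$, which is precisely why the weighted-norm or iterated-map device is needed instead of a naive application of the fixed-point theorem.
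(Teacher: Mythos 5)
The paper does not prove this lemma: it is imported verbatim from the cited reference \cite{Gronwall} (Alzabut et al.), so there is no internal proof to compare against. Your outline is nevertheless the standard argument used in that source and is sound: the composition identity ${}^C\!I_{0}^{\alpha,\mu}\,{}^C\!D_{0}^{\alpha,\mu}h(t)=h(t)-h(0)\exp\!\left(\frac{\mu-1}{\mu}t\right)$ is correct (your Fubini--Beta computation checks out, and the key observation that $\frac{1}{\mu}\exp\!\left(\frac{\mu-1}{\mu}(t-s)\right)D^{1,\mu}h(s)$ is an exact derivative is exactly right, as is the fact that $D^{1,\mu}$ annihilates $\exp\!\left(\frac{\mu-1}{\mu}t\right)$, which is what kills the initial-value term in the converse direction). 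Two fine points are worth flagging. First, the ``if and only if'' clause is really the \emph{definition} of a mild solution; your two-way differentiation/integration argument establishes the stronger statement that classical $C^{1}$ solutions coincide with solutions of the integral equation, which requires the $C^{1}$ regularity demanded by Definition~\ref{GPF} and the second composition identity ${}^C\!D_{0}^{\alpha,\mu}\,{}^C\!I_{0}^{\alpha,\mu}g=g$; for merely continuous fixed points of $\mathcal{T}$ the differential form need not make classical sense, so you should present the equivalence as holding for sufficiently regular $y$. Second, your instinct that a naive contraction estimate fails on all of $[0,T]$ is correct, and either the Bielecki weighted norm or the bound $\|\mathcal{T}^{k}y-\mathcal{T}^{k}z\|\leq \frac{C^{k}T^{k\alpha}}{\Gamma(k\alpha+1)}\|y-z\|$ (which tends to $0$ as $k\to\infty$) closes the argument; note also that $\mathcal{T}$ should be restricted to the closed affine subset of $C([-\tau,T],\mathbb{R}^{n})$ of functions equal to $\phi$ on $[-\tau,0]$ so that the delayed term on $[0,\tau]$ is controlled by $\|\phi\|_{C}$ rather than by the unknown.
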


\begin{lemma}[Generalized proportional fractional Gr\"onwall 
inequality\index{generalized proportional fractional Gr\"onwall inequality} \cite{Gronwall}]
\label{Gronwall}
Suppose $\beta>0$, $\mu>0$, $g$ and $J$ are nonnegative 
and locally integrable functions on $[0,t_{f})$ $(t_{f}\leq \infty)$ 
and $h$ is a nonnegative, nondecreasing,
and continuous function on $[0,t_{f})$ satisfying 
$h(t) \leq L$, where $L$ is a constant. In addition, if
$$
g(t)\leq J(t)+ h(t)\displaystyle 
\int_{0}^{t}\exp\left(\frac{\mu-1}{\mu}(t-s)\right) (t-s)^{\alpha-1}g(s) \, \mathrm{d}s,
$$
then
$$
g(t)\leq J(t)+\displaystyle \int_{0}^{t} \left[
\sum_{n=1}^{+\infty}\dfrac{\left(h(t)\Gamma(\alpha)\right)^{n}}{\Gamma(n\alpha)}
\exp\left(\frac{\mu-1}{\mu}(t-s)\right) (t-s)^{n\alpha-1}J(s)\right]\, \mathrm{d}s,
\quad t\in[0,t_{f}].
$$
Moreover, if function $J$ is nondecreasing on $[0, t_{f})$, then
\begin{equation}
\label{GrenwalMittga}
g(t)\leq f(t)E_{\alpha}(h(t)\Gamma(\alpha)t^{\alpha}),
\quad t\in[0,t_{f}],
\end{equation}
where $E_{\alpha}(\cdot)$ is the Mittag-Leffler 
function\index{Mittag-Leffler 
	function} of one parameter \cite{Sabatier} given by
$$
E_{\alpha}(x)=\displaystyle
\sum_{k=0}^{+\infty}\dfrac{x^{k}}{\Gamma(\alpha k+1)},
\quad x\in\mathbb{C}.
$$
\end{lemma}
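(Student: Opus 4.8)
The plan is to recast the hypothesis as a fixed-point-type inequality for an integral operator and then iterate it. Define, for a nonnegative locally integrable $\varphi$, the operator
$$
(P\varphi)(t)=h(t)\int_{0}^{t}\exp\left(\frac{\mu-1}{\mu}(t-s)\right)(t-s)^{\alpha-1}\varphi(s)\,\mathrm{d}s,
$$
so that the assumption reads $g\leq J+Pg$. Since $h\geq 0$ and the kernel is nonnegative, $P$ is monotone: $\varphi_{1}\leq\varphi_{2}$ implies $P\varphi_{1}\leq P\varphi_{2}$. Substituting the inequality into itself $n$ times and using monotonicity yields the telescoped bound
$$
g(t)\leq\sum_{k=0}^{n-1}(P^{k}J)(t)+(P^{n}g)(t),\quad t\in[0,t_{f}),
$$
with $P^{0}J=J$. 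The two remaining tasks are to evaluate the iterates $P^{k}J$ in closed form and to show that the remainder $P^{n}g$ vanishes as $n\to\infty$.

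The heart of the argument, and the step I expect to be most delicate, is the inductive identity
$$
(P^{n}J)(t)\leq\frac{\left(h(t)\Gamma(\alpha)\right)^{n}}{\Gamma(n\alpha)}\int_{0}^{t}\exp\left(\frac{\mu-1}{\mu}(t-s)\right)(t-s)^{n\alpha-1}J(s)\,\mathrm{d}s,
$$
whose base case $n=1$ is just the definition of $P$. Assuming it for $n$ and applying $P$ once more, I would first use that $h$ is nondecreasing to replace the inner factor $h(s)^{n}$ by $h(t)^{n}$, then invoke the semigroup property of the exponential weight, $\exp(\frac{\mu-1}{\mu}(t-s))\exp(\frac{\mu-1}{\mu}(s-r))=\exp(\frac{\mu-1}{\mu}(t-r))$, to pull a single exponential through the double integral. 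After interchanging the order of integration by Fubini--Tonelli (the domain being $0\leq r\leq s\leq t$), the inner $s$-integral becomes the Beta integral $\int_{r}^{t}(t-s)^{\alpha-1}(s-r)^{n\alpha-1}\,\mathrm{d}s=(t-r)^{(n+1)\alpha-1}\Gamma(\alpha)\Gamma(n\alpha)/\Gamma((n+1)\alpha)$, evaluated via the substitution $s=r+(t-r)u$. The factor $\Gamma(n\alpha)$ cancels exactly, advancing the induction to $n+1$ and reproducing the claimed coefficient.

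Finally, with the closed form in hand I would let $n\to\infty$. Using $h\leq L$ together with $\exp(\frac{\mu-1}{\mu}(t-s))\leq 1$ on $[0,t]$ (valid since $\mu\in(0,1]$), the remainder obeys $(P^{n}g)(t)\leq \frac{(L\Gamma(\alpha))^{n}}{\Gamma(n\alpha)}\int_{0}^{t}(t-s)^{n\alpha-1}g(s)\,\mathrm{d}s$, and the superexponential growth of $\Gamma(n\alpha)$ forces this to zero; the same growth guarantees absolute convergence of the series, justifying the passage to the limit and the interchange of sum and integral by monotone convergence. This establishes the first assertion. For the refinement, when $J$ is nondecreasing I would bound $J(s)\leq J(t)$ inside each integral and estimate $\int_{0}^{t}\exp(\frac{\mu-1}{\mu}(t-s))(t-s)^{n\alpha-1}\,\mathrm{d}s\leq t^{n\alpha}/(n\alpha)$; since $\Gamma(n\alpha)\,n\alpha=\Gamma(n\alpha+1)$, the resulting series collapses to $J(t)\sum_{n\geq 0}(h(t)\Gamma(\alpha)t^{\alpha})^{n}/\Gamma(n\alpha+1)=J(t)E_{\alpha}(h(t)\Gamma(\alpha)t^{\alpha})$, which is the desired Mittag-Leffler bound (the $f(t)$ in \eqref{GrenwalMittga} being $J(t)$).
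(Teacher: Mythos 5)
Your proof is correct and follows essentially the same iteration-of-the-integral-operator argument (monotone operator $P$, Beta-integral induction on $P^{n}J$, Stirling decay of the remainder, then $J(s)\leq J(t)$ to collapse the series into $E_{\alpha}$) that underlies this lemma in the cited source \cite{Gronwall}; the paper itself states the result without proof. You also correctly identify that the $f(t)$ appearing in \eqref{GrenwalMittga} is a typo for $J(t)$, and your parenthetical restriction to $\mu\in(0,1]$ (so that the exponential kernel is bounded by $1$) matches the regime actually used throughout the paper.
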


We conclude this section by the definition 
of finite time stability\index{finite time stability} of system \eqref{system2}.

\begin{definition}
\label{def1:stab}
For given positive numbers $c_{1}$ and $c_{2}$, $c_{1}\leq c_{2}$,	
system \eqref{system2} is finite time stable\index{finite time stable} with respect to 
$\{c_{1}, c_{2}, T \}$ if 
\begin{equation}
\Arrowvert \phi \Arrowvert_{C} \leq c_{1} \Rightarrow
\Arrowvert y(t) \Arrowvert < c_{2}, \quad \forall t \in [0,T].
\end{equation}
\end{definition}

\begin{remark} 
If we let $\mu=1$ in system \eqref{system2}, then one obtains 
the finite time stability definition of Caputo fractional 
systems with delays \cite{Lazarevi}.
\end{remark}	
	

\section{Finite time stability of delayed GPFSs}
\label{sec:3}

In this section, we begin by studying the stability of the GPFS \eqref{system2} 
over a finite time interval in the homogeneous case $f=0$. 

\begin{theorem}
\label{theob}
System \eqref{system2} is finite time stable\index{finite time stable}
with respect to $\{c_{1}, c_{2}, T\}$, $c_{1}\leq c_{2}$, 
if the function~$f$ satisfies condition~\eqref{condLipch} and
\begin{equation}
\label{cnd1} 
\left(1
+\dfrac{(\Arrowvert A \Arrowvert + \Arrowvert B
\Arrowvert )t^{\alpha}}{\mu^{\alpha}\Gamma(\alpha+1)}\right)
E_{\alpha}\left(\dfrac{(\Arrowvert A \Arrowvert + \Arrowvert B
\Arrowvert)}{\mu^{\alpha}}t^{\alpha}\right)
\leq \dfrac{c_{2}}{c_{1}},
\end{equation}
for all $t \in [0,T]$.
\end{theorem}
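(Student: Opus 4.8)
The plan is to treat the homogeneous case $f\equiv 0$ of this section (the Lipschitz hypothesis~\eqref{condLipch} being invoked only to guarantee, through Lemma~\ref{Lemma:1}, the existence of a unique mild solution), and to reduce the \emph{delayed} integral inequality obeyed by $\Arrowvert y(t)\Arrowvert$ to a non-delayed one to which the generalized proportional fractional Gr\"onwall inequality of Lemma~\ref{Gronwall} applies. First I would start from the mild solution representation~\eqref{system-2}, in which the generalized proportional integral supplies the kernel weight $\exp(\frac{\mu-1}{\mu}(t-s))(t-s)^{\alpha-1}$ while the right-hand side of~\eqref{system2} carries the extra factor $\exp(\frac{\mu-1}{\mu}s)$. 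Since $\frac{\mu-1}{\mu}\leq 0$ for $\mu\in(0,1]$, the weights $\exp(\frac{\mu-1}{\mu}t)$ and $\exp(\frac{\mu-1}{\mu}s)$ are both at most $1$ on $[0,T]$; retaining the kernel weight (which is precisely what matches Lemma~\ref{Gronwall}) while bounding the other two by $1$, taking norms with the triangle inequality and using $\Arrowvert\phi(0)\Arrowvert\leq\Arrowvert\phi\Arrowvert_{C}\leq c_{1}$, I obtain
\begin{equation*}
\Arrowvert y(t)\Arrowvert \leq c_{1}+\frac{1}{\mu^{\alpha}\Gamma(\alpha)}\int_{0}^{t}\exp\!\left(\tfrac{\mu-1}{\mu}(t-s)\right)(t-s)^{\alpha-1}\left(\Arrowvert A\Arrowvert\,\Arrowvert y(s)\Arrowvert+\Arrowvert B\Arrowvert\,\Arrowvert y(s-\tau)\Arrowvert\right)\mathrm{d}s,\qquad t\in[0,T].
\end{equation*}

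The main obstacle is the delayed term $\Arrowvert y(s-\tau)\Arrowvert$, which blocks a direct application of Lemma~\ref{Gronwall}. To absorb it I would pass to the nondecreasing majorant $v(t)=\sup_{-\tau\leq\theta\leq t}\Arrowvert y(\theta)\Arrowvert$. For $s\in[0,t]$ one has both $\Arrowvert y(s)\Arrowvert\leq v(s)$ and $\Arrowvert y(s-\tau)\Arrowvert\leq v(s)$, so the previous estimate gives $\Arrowvert y(t)\Arrowvert\leq R(t)$, where
\begin{equation*}
R(t)=c_{1}+\frac{\Arrowvert A\Arrowvert+\Arrowvert B\Arrowvert}{\mu^{\alpha}\Gamma(\alpha)}\int_{0}^{t}\exp\!\left(\tfrac{\mu-1}{\mu}(t-s)\right)(t-s)^{\alpha-1}v(s)\,\mathrm{d}s.
\end{equation*}
After the substitution $r=t-s$ one checks that $R$ is nondecreasing (the integrand rises with $t$ because $v$ does); since moreover $R(0)=c_{1}\geq\Arrowvert\phi\Arrowvert_{C}\geq\Arrowvert y(\theta)\Arrowvert$ on $[-\tau,0]$ and $\Arrowvert y(\theta)\Arrowvert\leq R(\theta)\leq R(t)$ on $[0,t]$, taking the supremum yields $v(t)\leq R(t)$. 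Thus $v$ satisfies the self-referential inequality of Lemma~\ref{Gronwall} with $g=v$, the nondecreasing constant $J\equiv c_{1}$, and the constant $h=(\Arrowvert A\Arrowvert+\Arrowvert B\Arrowvert)/(\mu^{\alpha}\Gamma(\alpha))$.

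Applying conclusion~\eqref{GrenwalMittga} and noting that $h\,\Gamma(\alpha)t^{\alpha}=\frac{\Arrowvert A\Arrowvert+\Arrowvert B\Arrowvert}{\mu^{\alpha}}t^{\alpha}$, I get $v(t)\leq c_{1}\,E_{\alpha}(\frac{\Arrowvert A\Arrowvert+\Arrowvert B\Arrowvert}{\mu^{\alpha}}t^{\alpha})$ on $[0,T]$. The decisive final step is to feed this bound back into the integral estimate for $\Arrowvert y(t)\Arrowvert$ itself: replacing $v(s)$ by the right-hand side above, using monotonicity of $E_{\alpha}$ (so that its value at $s^{\alpha}$ is majorized by its value at $t^{\alpha}$ and may be pulled out of the integral), bounding the kernel exponential by $1$, and evaluating $\frac{1}{\mu^{\alpha}\Gamma(\alpha)}\int_{0}^{t}(t-s)^{\alpha-1}\mathrm{d}s=\frac{t^{\alpha}}{\mu^{\alpha}\Gamma(\alpha+1)}$, I arrive at
\begin{equation*}
\Arrowvert y(t)\Arrowvert\leq c_{1}\left(1+\frac{(\Arrowvert A\Arrowvert+\Arrowvert B\Arrowvert)t^{\alpha}}{\mu^{\alpha}\Gamma(\alpha+1)}\,E_{\alpha}\!\left(\frac{\Arrowvert A\Arrowvert+\Arrowvert B\Arrowvert}{\mu^{\alpha}}t^{\alpha}\right)\right).
\end{equation*}

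Since $E_{\alpha}(x)\geq 1$ for $x\geq 0$, the right-hand side is dominated by $c_{1}\left(1+\frac{(\Arrowvert A\Arrowvert+\Arrowvert B\Arrowvert)t^{\alpha}}{\mu^{\alpha}\Gamma(\alpha+1)}\right)E_{\alpha}\!\left(\frac{\Arrowvert A\Arrowvert+\Arrowvert B\Arrowvert}{\mu^{\alpha}}t^{\alpha}\right)$, so hypothesis~\eqref{cnd1} forces $\Arrowvert y(t)\Arrowvert<c_{2}$ for every $t\in[0,T]$, which is exactly finite time stability with respect to $\{c_{1},c_{2},T\}$. I expect the one genuinely delicate point to be justifying the majorant step, namely the monotonicity of $R$ and the passage $v(t)\leq R(t)$; once that is in place, the Gr\"onwall application and the concluding algebra are routine.
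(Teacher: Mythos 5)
Your argument is correct and runs on the same engine as the paper's proof: pass to the mild solution, bound the exponential weights by $1$, majorize $\Arrowvert y\Arrowvert$ by a running supremum, and apply the generalized proportional Gr\"onwall inequality of Lemma~\ref{Gronwall}. The one real difference is the decomposition of the delayed term. The paper works with $z(t)=\sup_{0\leq\theta\leq t}\Arrowvert y(\theta)\Arrowvert$ and the bound $\Arrowvert y(s-\tau)\Arrowvert\leq z(s)+\Arrowvert\phi\Arrowvert_{C}$; the extra $\Arrowvert\phi\Arrowvert_{C}$ gets integrated against the kernel and produces the nonconstant nondecreasing $J(t)=\bigl(1+\frac{(\Arrowvert A\Arrowvert+\Arrowvert B\Arrowvert)t^{\alpha}}{\mu^{\alpha}\Gamma(\alpha+1)}\bigr)\Arrowvert\phi\Arrowvert_{C}$, so one application of Lemma~\ref{Gronwall} lands exactly on the product in~\eqref{cnd1}. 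You instead start the supremum at $-\tau$, which absorbs the initial data and yields the constant $J\equiv c_{1}$; Gr\"onwall then gives $v(t)\leq c_{1}E_{\alpha}\bigl(\frac{\Arrowvert A\Arrowvert+\Arrowvert B\Arrowvert}{\mu^{\alpha}}t^{\alpha}\bigr)$, which is already \emph{sharper} than the paper's bound since the prefactor is at least $1$. Consequently your final ``feed the bound back into the integral'' step is superfluous: $\Arrowvert y(t)\Arrowvert\leq v(t)\leq c_{1}E_{\alpha}(\cdots)$ together with $E_{\alpha}\geq 1$ already yields the conclusion under~\eqref{cnd1}, and in fact shows that the weaker hypothesis $E_{\alpha}\bigl(\frac{\Arrowvert A\Arrowvert+\Arrowvert B\Arrowvert}{\mu^{\alpha}}t^{\alpha}\bigr)\leq c_{2}/c_{1}$ would suffice. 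Two cosmetic points: your substitution variable $r$ collides with the exponent $r=1+1/\alpha$ used later in the paper, and, exactly like the paper, you pass from an estimate $\leq c_{2}$ to the strict inequality $<c_{2}$ of Definition~\ref{def1:stab} without comment.
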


\begin{proof}
From Lemma~\ref{Lemma:1}, the solution\index{solution} 
of system~\eqref{system2} can be written as  
\begin{multline*}	
y(t)=\phi(0)\exp\left(\frac{\mu-1}{\mu}t\right) 
+\dfrac{1}{\mu^{\alpha}\Gamma(\alpha)}\displaystyle \int_{0}^{t}
\exp\left(\frac{\mu-1}{\mu}(t-s)\right)\\
\times\exp\left(\frac{\mu-1}{\mu}s\right) (t-s)^{\alpha-1}
\left[ A(s)y(s)+B(s)y(s-\tau)\right] \, \mathrm{d}s
\end{multline*}
for all $t\in [0,T]$. Taking into account condition \eqref{condLipch} 
and using $\exp\left(\frac{\mu-1}{\mu}s\right) \leq 1$ 
for all $s\in [0,t]$, we obtain that
\begin{equation}
\label{impa}
\Arrowvert y(t)\Arrowvert
\leq\Arrowvert\phi(0)\Arrowvert+\dfrac{1}{\mu^{\alpha}
\Gamma(\alpha)}\displaystyle \int_{0}^{t}\exp\left(\frac{\mu-1}{\mu}(t-s)\right) (t-s)^{\alpha-1}
\left[\Arrowvert A \Arrowvert\Arrowvert y(s)\Arrowvert
+\Arrowvert B \Arrowvert \Arrowvert y(s-\tau)\Arrowvert\right] \, \mathrm{d}s.
\end{equation}
For all $t\in [0,T]$, let us consider $ z(t)
=\underset{0\leq \theta\leq t}{\sup}\Arrowvert y(\theta)\Arrowvert$. Then, one has 
\begin{equation}
\label{imp1}
\Arrowvert y(s-\tau)\Arrowvert \leq  z(s)+ \Arrowvert\phi\Arrowvert_{C}, 
\quad \forall s\in [0,t].
\end{equation}
Combining relation \eqref{imp1} and inequality \eqref{impa}, it implies that
\begin{equation}
\label{impb}
\Arrowvert y(t)\Arrowvert
\leq\Arrowvert\phi(0)\Arrowvert+\dfrac{1}{\mu^{\alpha}
\Gamma(\alpha)}\displaystyle\int_{0}^{t}\exp\left(\frac{\mu-1}{\mu}(t-s)\right) 
(t-s)^{\alpha-1}(\Arrowvert A \Arrowvert + \Arrowvert B
\Arrowvert )\left( z(s)
+ \Arrowvert\phi\Arrowvert_{C}\right) \, \mathrm{d}s.
\end{equation}
It follows that
\begin{equation}
\label{eq1}
\Arrowvert y(t)\Arrowvert\leq\Arrowvert\phi\Arrowvert_{C}
+\dfrac{(\Arrowvert A \Arrowvert + \Arrowvert B
\Arrowvert )t^{\alpha}}{\mu^{\alpha}
\Gamma(\alpha+1)}\Arrowvert\phi\Arrowvert_{C}
+\dfrac{(\Arrowvert A \Arrowvert + \Arrowvert B
\Arrowvert )}{\mu^{\alpha}\Gamma(\alpha)}
\displaystyle \int_{0}^{t}\exp\left(\frac{\mu-1}{\mu}(t-s)\right) 
(t-s)^{\alpha-1} z(s) \, \mathrm{d}s
\end{equation}
and, using the change of variable $x=t-s$, we get
\begin{equation}
\label{det1}
\Arrowvert y(t)\Arrowvert\leq\Arrowvert\phi\Arrowvert_{C}
+\dfrac{(\Arrowvert A \Arrowvert + \Arrowvert B
\Arrowvert )t^{\alpha}}{\mu^{\alpha}
\Gamma(\alpha+1)}\Arrowvert\phi\Arrowvert_{C}
+\dfrac{(\Arrowvert A \Arrowvert + \Arrowvert B
\Arrowvert )}{\mu^{\alpha}\Gamma(\alpha)}
\displaystyle \int_{0}^{t}\exp\left(\frac{\mu-1}{\mu}x\right) x^{\alpha-1}
 z(t-x) \, \mathrm{d}x.
\end{equation}
Moreover, by taking $t=\theta$ in \eqref{det1} with $\theta \in [0,t]$ 
and using $\theta^{\alpha}\leq t^{\alpha}$, we obtain
\begin{equation}
\label{eq2}
\Arrowvert y(\theta)\Arrowvert
\leq\left[1+\dfrac{(\Arrowvert A \Arrowvert + \Arrowvert B
\Arrowvert )t^{\alpha}}{\mu^{\alpha}
\Gamma(\alpha+1)}\right]\Arrowvert\phi\Arrowvert_{C}
+\dfrac{(\Arrowvert A \Arrowvert + \Arrowvert B
\Arrowvert )}{\mu^{\alpha}\Gamma(\alpha)}
\displaystyle \int_{0}^{\theta}\exp\left(\frac{\mu-1}{\mu}x\right) x^{\alpha-1}
 z(\theta-x) \, \mathrm{d}x.
\end{equation}
Since function $ z$ is nonnegative, it implies that 
$\displaystyle \int_{0}^{t}\exp\left(\frac{\mu-1}{\mu}x\right) x^{\alpha-1}
 z(t-x) \, \mathrm{d}x$ is an increasing function 
with respect to $t\geq0$, which yields
\begin{equation*}
\Arrowvert y(\theta)\Arrowvert\leq\left[1
+\dfrac{(\Arrowvert A \Arrowvert + \Arrowvert B\Arrowvert )
t^{\alpha}}{\mu^{\alpha}\Gamma(\alpha+1)}\right]
\Arrowvert\phi\Arrowvert_{C}+\dfrac{(\Arrowvert A \Arrowvert + \Arrowvert B
\Arrowvert )}{\mu^{\alpha}\Gamma(\alpha)}
\displaystyle \int_{0}^{t}\exp\left(\frac{\mu-1}{\mu}x\right) x^{\alpha-1}
 z(t-x)\, \mathrm{d}x.
\end{equation*}
Hence,
\begin{equation*}
 z(t)\leq\left[1+\dfrac{(\Arrowvert A \Arrowvert 
+ \Arrowvert B \Arrowvert) t^{\alpha}}{\mu^{\alpha}
\Gamma(\alpha+1)}\right]\Arrowvert\phi\Arrowvert_{C}
+\dfrac{(\Arrowvert A \Arrowvert + \Arrowvert B
\Arrowvert )}{\mu^{\alpha}\Gamma(\alpha)}
\displaystyle \int_{0}^{t}\exp\left(\frac{\mu-1}{\mu}(t-s)\right) (t-s)^{\alpha-1}
 z(s)\, \mathrm{d}s.
\end{equation*}
Denote $J(t)=\left[1+\dfrac{(\Arrowvert A \Arrowvert + \Arrowvert B
\Arrowvert )t^{\alpha}}{\mu^{\alpha}
\Gamma(\alpha+1)}\right]\Arrowvert\phi\Arrowvert_{C}$,
which is a nondecreasing function. By using Lemma~\ref{Gronwall} with 
$$
h(t)=\dfrac{\Arrowvert A \Arrowvert + \Arrowvert B
\Arrowvert}{\mu^{\alpha}\Gamma(\alpha)}, 
$$
one obtains
$$
\Arrowvert y(t)\Arrowvert
\leq \Arrowvert\phi\Arrowvert_{C} \left[1
+\dfrac{(\Arrowvert A \Arrowvert + \Arrowvert B
\Arrowvert )t^{\alpha}}{\mu^{\alpha}
\Gamma(\alpha+1)}\right]E_{\alpha}\left(
\dfrac{(\Arrowvert A \Arrowvert + \Arrowvert B
\Arrowvert )}{\mu^{\alpha}}t^{\alpha}\right).
$$
Therefore, by virtue of $\Arrowvert \phi \Arrowvert_{C} \leq c_{1}$ 
and~\eqref{cnd1}, it follows that
$\Arrowvert y(t) \Arrowvert < c_{2}$ 
for all $t \in [0,T]$.
\end{proof}

\begin{remark} 
If we let $\mu=1$ in system~\eqref{system2} and Theorem~\ref{theob}, 
then one retrieves the condition 
\begin{equation*}
\left(1+\dfrac{(\Arrowvert A \Arrowvert + \Arrowvert B
\Arrowvert) t^{\alpha}}{\Gamma(\alpha+1)}\right)
E_{\alpha}\left((\Arrowvert A \Arrowvert + \Arrowvert B
\Arrowvert) t^{\alpha}\right)
\leq \dfrac{c_{2}}{c_{1}}, 
\quad \forall t \in [0,T],
\end{equation*}
for the finite time stability of the Caputo 
fractional order time-delay system, 
established in~\cite{Lazarevi}.
\end{remark}

Now, we shall characterize a sufficient condition 
to ensure the finite time stability\index{finite time stability} 
of the GPFS~\eqref{system2}.

\begin{theorem}
\label{theoa}
System~\eqref{system2} is finite time stable\index{finite time stable} with 
respect to $\{c_{1}, c_{2},T\}$, $c_{1}\leq c_{2}$, 
if the function~$f$ satisfies condition~\eqref{condLipch} and
\begin{equation}
\label{cnd4}
^{r}\sqrt{\dfrac{3^{\frac{1}{\alpha}}r+(3^{\frac{1}{\alpha}}\psi+r\varphi
+\psi\varphi)\exp\left((\psi+r) t\right) }{r+\psi}}
\leq \dfrac{c_{2}}{c_{1}}, \quad \forall t \in [0,T],
\end{equation}
where
\begin{equation}
\label{Psi}
\psi=\dfrac{3^{\frac{1}{\alpha}}((\Arrowvert A \Arrowvert+L_{f})^{r}
+(\Arrowvert B \Arrowvert +L_{f})^{r}
\exp\left(-r\tau\right) )\omega^{r}}{\mu^{k}\Gamma^{r}(\alpha)},
\end{equation}
\begin{equation}
\label{phi}
\varphi=\dfrac{3^{\frac{1}{\alpha}}(\Arrowvert B \Arrowvert 
+L_{f})^{r}(1-\exp\left(-\tau q\right) )}{r\mu^{k}\Gamma^{r}(\alpha)}\omega^{r},
\end{equation}
$k=1+\alpha$, $r=1+\dfrac{1}{\alpha}$, and 
$\omega=\left(\dfrac{\Gamma(\alpha^{2})}{k^{\alpha^{2}}}\right)^{\dfrac{1}{k}}$.
\end{theorem}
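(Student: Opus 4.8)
The plan is to mirror the structure of the proof of Theorem~\ref{theob}, but to replace the Bellman--Gr\"onwall step by a power-type estimate driven by H\"older's and Jensen's inequalities; this is exactly what forces the appearance of the conjugate pair $k=1+\alpha$ and $r=1+\tfrac1\alpha$ (note $\tfrac1k+\tfrac1r=1$ and $\alpha r=k$). First I would start from the mild-solution representation of Lemma~\ref{Lemma:1}, take Euclidean norms, and invoke the Lipschitz hypothesis \eqref{condLipch} together with $f(t,0,0)=0$ to bound $\|f(s,y(s),y(s-\tau))\|\le L_f\big(\|y(s)\|+\|y(s-\tau)\|\big)$. Using $\exp\big(\tfrac{\mu-1}{\mu}t\big)\le1$ on the initial term only, while keeping the kernel weight $\exp\big(\tfrac{\mu-1}{\mu}(t-s)\big)$ inside the integral, this produces an estimate of the form
\[
\|y(t)\|\le \|\phi\|_{C}+\frac{1}{\mu^{\alpha}\Gamma(\alpha)}\int_{0}^{t}\exp\Big(\tfrac{\mu-1}{\mu}(t-s)\Big)(t-s)^{\alpha-1}\big[(\|A\|+L_f)\|y(s)\|+(\|B\|+L_f)\|y(s-\tau)\|\big]\,\mathrm{d}s .
\]

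Next I would raise both sides to the power $r$ and apply Jensen's inequality in the convexity form $(a_1+a_2+a_3)^r\le 3^{\,r-1}(a_1^r+a_2^r+a_3^r)$; since $r-1=\tfrac1\alpha$, this is precisely the source of the factor $3^{1/\alpha}$ in \eqref{Psi}--\eqref{phi}. To each of the two resulting convolution integrals I would then apply H\"older's inequality with the conjugate pair $(k,r)$, separating the weighted kernel $\exp(\cdot)(t-s)^{\alpha-1}$, raised to the power $k$, from $\|y(\cdot)\|^{r}$. Because $(\alpha-1)k=\alpha^2-1$, the kernel factor yields $\int_{0}^{t}\exp\big(k\tfrac{\mu-1}{\mu}(t-s)\big)(t-s)^{\alpha^2-1}\,\mathrm{d}s$, which after the substitution $u=t-s$ is dominated by a Gamma integral and produces the constant $\omega=\big(\Gamma(\alpha^2)/k^{\alpha^2}\big)^{1/k}$, while the prefactor $(\mu^{\alpha}\Gamma(\alpha))^{-r}$ supplies the $\mu^{k}$ (using $\alpha r=k$) and $\Gamma^{r}(\alpha)$ in the denominators of $\psi$ and $\varphi$.

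The delay contribution is the step that renders the criterion delay-dependent, and I expect it to be the main obstacle. Here I would substitute $\sigma=s-\tau$ in the $B$-integral and split at $\sigma=0$ into a memory part on $[-\tau,0]$, where $y\equiv\phi$, and a shifted-state part on $[0,t-\tau]\subset[0,t]$. Carrying the exponential weight through this shift and through H\"older turns the truncated kernel integrals into the factors $\exp(-r\tau)$ and $1-\exp(-\tau q)$: the former attaches to $(\|B\|+L_f)^{r}$ inside $\psi$ (the shifted-state term, which still feeds back into $\int_0^t\|y\|^r$), while the latter, together with the memory bound $\|\phi\|_C$, is collected into the constant $\varphi$ of \eqref{phi}. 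Setting $z(t)=\underset{0\le\theta\le t}{\sup}\|y(\theta)\|$ and passing to the supremum exactly as in Theorem~\ref{theob}, everything collapses to a single linear integral inequality $z(t)^{r}\le C(\|\phi\|_C)+\psi\int_{0}^{t}z(s)^{r}\,\mathrm{d}s$ with $\psi,\varphi$ as defined. The careful bookkeeping of these Gamma and exponential factors --- in particular ensuring the truncated kernels reproduce $\exp(-r\tau)$ and $1-\exp(-\tau q)$ rather than coarser bounds --- is the technical heart of the argument.

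Finally I would solve this inequality. Writing $w(t)=z(t)^{r}$, the estimate is of Bellman type and, once the constant and the remaining linearly growing term are accounted for, integrates (via a linear comparison problem with effective rate $\psi+r$) to the closed form
\[
w(t)\le \|\phi\|_{C}^{\,r}\,\frac{3^{1/\alpha}r+\big(3^{1/\alpha}\psi+r\varphi+\psi\varphi\big)\exp\big((\psi+r)t\big)}{r+\psi}.
\]
Taking $r$-th roots, using $\|\phi\|_{C}\le c_{1}$, and invoking hypothesis \eqref{cnd4} then gives $\|y(t)\|<c_{2}$ for all $t\in[0,T]$, which is the asserted finite time stability in the sense of Definition~\ref{def1:stab}.
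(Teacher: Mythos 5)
Your plan follows the paper's proof almost step for step: the mild-solution representation plus the Lipschitz bound, H\"older's inequality with the conjugate pair $(k,r)=(1+\alpha,\,1+\tfrac1\alpha)$, a Gamma-integral bound producing $\omega$, Jensen's inequality with $3^{r-1}=3^{1/\alpha}$, the shift-and-split of the delay integral giving the factors $e^{-r\tau}$ and $1-e^{-r\tau}$, and a final classical Gr\"onwall argument leading to \eqref{concl}. The order in which you apply Jensen and H\"older, and the auxiliary supremum $z(t)$, are immaterial rearrangements.

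There is, however, one concrete gap, and it sits exactly in the step you yourself flag as the technical heart. You propose to carry the proportional kernel weight $\exp\bigl(\tfrac{\mu-1}{\mu}(t-s)\bigr)$ through H\"older and to let $\int_0^t\exp\bigl(k\tfrac{\mu-1}{\mu}(t-s)\bigr)(t-s)^{\alpha^2-1}\,\mathrm{d}s$ deliver the constant $\omega=\bigl(\Gamma(\alpha^2)/k^{\alpha^2}\bigr)^{1/k}$. But after the substitution $u=t-s$ this integral is dominated by $\Gamma(\alpha^2)\bigl(\tfrac{\mu}{k(1-\mu)}\bigr)^{\alpha^2}$, which is at most $\Gamma(\alpha^2)/k^{\alpha^2}$ only when $\mu\le\tfrac12$ and diverges as $\mu\to1$; and, more importantly, if the only exponential in the H\"older split sits on the kernel side, then no weight $e^{-rs}$ remains attached to $\Arrowvert y(s)\Arrowvert^{r}$, so the factors $e^{-r\tau}$ and $1-e^{-r\tau}$ in $\psi$ and $\varphi$, and the $+r$ in the growth rate $\psi+r$, have no source --- you use their consequences without introducing the device that generates them. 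The paper proceeds differently: it bounds the entire proportional kernel by $1$ and then inserts the artificial factor $e^{s}e^{-s}$, sending $e^{ks}$ to the kernel side of H\"older (so that $\int_0^t(t-s)^{k(\alpha-1)}e^{ks}\,\mathrm{d}s\le e^{kt}\Gamma(\alpha^2)/k^{\alpha^2}$, uniformly in $\mu$) and $e^{-rs}$ to the state side; this single insertion simultaneously produces $\omega$, the delay factors after the shift, and the Gr\"onwall inequality for $e^{-rt}\Arrowvert y(t)\Arrowvert^{r}$ whose explicit solution is \eqref{concl}. With that substitution in place of your kernel-weight argument, your outline coincides with the paper's proof.
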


\begin{proof} 
According to Lemma~\ref{Lemma:1}, the mild solution\index{solution}  
of system~\eqref{system2} is given by  
\begin{multline*}
y(t)=\phi(0)\exp\left(\frac{\mu-1}{\mu}t\right) 
+\dfrac{1}{\mu^{\alpha}\Gamma(\alpha)}
\displaystyle \int_{0}^{t}\exp\left(\frac{\mu-1}{\mu}(t-s)\right) 
\exp\left(\frac{\mu-1}{\mu}s\right) (t-s)^{\alpha-1}\\
\times \left[  
A(s)y(s)+B(s)y(s-\tau)+f(s,y(s),y(s-\tau))\right] \, \mathrm{d}s.
\end{multline*}
By virtue of condition~\eqref{condLipch} with $f(s,0,0)=0$, it follows that
\begin{equation*}
\Arrowvert y(t)\Arrowvert\leq\Arrowvert\phi(0)\Arrowvert
+\dfrac{1}{\mu^{\alpha}\Gamma(\alpha)}\displaystyle\int_{0}^{t}(t-s)^{\alpha-1}
\left[(\Arrowvert A \Arrowvert+L_{f})\Arrowvert y(s)\Arrowvert
+(\Arrowvert B \Arrowvert +L_{f})\Arrowvert y(s-\tau)\Arrowvert\right] \, \mathrm{d}s,
\end{equation*}
which implies that
\begin{multline*}
\Arrowvert y(t)\Arrowvert
\leq\Arrowvert\phi(0)\Arrowvert+\dfrac{\Arrowvert A \Arrowvert
+L_{f}}{\mu^{\alpha}\Gamma(\alpha)}\displaystyle\int_{0}^{t}(t-s)^{\alpha-1}
\exp\left(s\right) \exp\left(-s\right) 
\Arrowvert y(s)\Arrowvert\, \mathrm{d}s\\
+\dfrac{\Arrowvert B \Arrowvert +L_{f}}{\mu^{\alpha}\Gamma(\alpha)}
\displaystyle\int_{0}^{t}(t-s)^{\alpha-1}\exp\left(s\right) \exp\left(-s\right) 
\Arrowvert y(s-\tau)\Arrowvert \, \mathrm{d}s.
\end{multline*}
By H\"older's inequality \cite{Holder}, we get
\begin{multline}
\label{imp2}	
\Arrowvert y(t)\Arrowvert
\leq\Arrowvert\phi(0)\Arrowvert+\dfrac{\Arrowvert A \Arrowvert
+L_{f}}{\mu^{\alpha}\Gamma(\alpha)}\left(\displaystyle
\int_{0}^{t}(t-s)^{k(\alpha-1)}\exp\left(ks\right) \, 
\mathrm{d}s\right)^{\dfrac{1}{k}} \times \left( 
\displaystyle\int_{0}^{t} \exp\left(-rs\right) \Arrowvert y(s)\Arrowvert^{r}\, 
\mathrm{d}s\right)^{\dfrac{1}{r}}\\
+\dfrac{\Arrowvert B \Arrowvert +L_{f}}{\mu^{\alpha}\Gamma(\alpha)}\left(
\displaystyle\int_{0}^{t}(t-s)^{k(\alpha-1)}\exp\left(ks\right) \, 
\mathrm{d}s\right)^{\dfrac{1}{k}} \times \left( 
\displaystyle\int_{0}^{t} \exp\left(-rs\right) \Arrowvert y(s-\tau)\Arrowvert^{r}\,
\mathrm{d}s\right)^{\dfrac{1}{r}}
\end{multline}
with $k=1+\alpha$ and $r=1+\dfrac{1}{\alpha}$. Also, one has 
\begin{equation}
\label{imp3}
\displaystyle\int_{0}^{t}(t-s)^{k(\alpha-1)}\exp\left(ks\right) \, \mathrm{d}s 
\leq \dfrac{\exp\left(kt\right)\Gamma(k(\alpha-1)+1)}{k^{k(\alpha-1)+1}}
=\dfrac{\Gamma(\alpha^{2})\exp\left(kt\right)}{k^{\alpha^{2}}}.
\end{equation}
Combining inequalities~\eqref{imp2} and~\eqref{imp3} yields
\begin{multline*}
\Arrowvert y(t)\Arrowvert
\leq \Arrowvert\phi(0)\Arrowvert+\dfrac{(\Arrowvert A \Arrowvert
+L_{f})\omega \exp\left(t\right) }{\mu^{\alpha}\Gamma(\alpha)} \left( 
\displaystyle\int_{0}^{t} \exp\left(-rs\right) \Arrowvert y(s)\Arrowvert^{r}\, 
\mathrm{d}s\right)^{\dfrac{1}{r}}\\
+\dfrac{(\Arrowvert B \Arrowvert +L_{f})\omega \exp\left(t\right) }{\mu^{\alpha}
\Gamma(\alpha)} \left( \displaystyle\int_{0}^{t}  
\exp\left(-rs\right) \Arrowvert y(s-\tau)\Arrowvert^{r}\, 
\mathrm{d}s\right)^{\dfrac{1}{r}}
\end{multline*}
with $\omega=\left(\dfrac{\Gamma(\alpha^{2})}{k^{\alpha^{2}}}\right)^{\dfrac{1}{k}}$. 
This implies that
\begin{multline}
\label{eq:pr:th1}
\Arrowvert y(t)\Arrowvert
\leq\Arrowvert\phi(0)\Arrowvert
+\dfrac{(\Arrowvert A \Arrowvert+L_{f})\omega \exp\left(t\right) }{\mu^{\alpha}
\Gamma(\alpha)} \left( \displaystyle\int_{0}^{t} \exp\left(-rs\right) 
\Arrowvert y(s)\Arrowvert^{r}\, \mathrm{d}s\right)^{\dfrac{1}{r}}\\
+\dfrac{(\Arrowvert B \Arrowvert +L_{f})\omega \exp\left(t\right) }{\mu^{\alpha}\Gamma(\alpha)} 
\left( \displaystyle\int_{-\tau}^{t}   
\exp\left(-r(s+\tau)\right) \Arrowvert y(s)\Arrowvert^{r}\, \mathrm{d}s\right)^{\dfrac{1}{r}}.	
\end{multline}
Now, by applying Jensen's inequality \cite{Kuczma} 
to inequality~\eqref{eq:pr:th1}, we obtain
\begin{multline*}
\Arrowvert y(t)\Arrowvert^{r}
\leq 3^{\frac{1}{\alpha}}\left[\Arrowvert\phi(0)\Arrowvert^{r}
+\dfrac{(\Arrowvert A \Arrowvert+L_{f})^{r}\omega^{r}\exp\left(rt\right) }{\mu^{k}
\Gamma^{r}(\alpha)} \left( \displaystyle\int_{0}^{t} 
\exp\left(-rs\right) \Arrowvert y(s)\Arrowvert^{r}\, \mathrm{d}s\right)\right.\\
+\left.\dfrac{(\Arrowvert B \Arrowvert +L_{f})^{r}\omega^{r}\exp\left(rt\right) }{\mu^{k}
\Gamma^{r}(\alpha)} \left( \displaystyle\int_{-\tau}^{t}       
\exp\left(-r(s+\tau)\right) \Arrowvert y(s)\Arrowvert^{r}\, \mathrm{d}s\right)\right].	
\end{multline*}
Then,
\begin{multline*}	
\Arrowvert y(t)\Arrowvert^{r}
\leq 3^{\frac{1}{\alpha}}\Arrowvert\phi(0)\Arrowvert^{r}
+\psi \exp\left(rt\right)  \displaystyle\int_{0}^{t} 
\exp\left(-rs\right) \Arrowvert y(s)\Arrowvert^{r}\, \mathrm{d}s\\
+\dfrac{3^{\frac{1}{\alpha}}(\Arrowvert B \Arrowvert 
+L_{f})^{r}\omega^{r} e^{q(t-\tau)}}{\mu^{k}\Gamma^{r}(\alpha)} 
\displaystyle\int_{-\tau}^{0}       
\exp\left(-rs\right) \Arrowvert y(s)\Arrowvert^{r}\, \mathrm{d}s,
\end{multline*}
where $\psi$ is defined in~\eqref{Psi}, which implies that
$$
\Arrowvert y(t)\Arrowvert^{r}\leq3^{\frac{1}{\alpha}}\Arrowvert\phi\Arrowvert_{C}^{r}
+\exp\left(rt\right) \Arrowvert\phi\Arrowvert_{C}^{r}\varphi 
+\psi \exp\left(rt\right)  \displaystyle\int_{0}^{t} 
\exp\left(-rs\right) \Arrowvert y(s)\Arrowvert^{r}\, \mathrm{d}s
$$
with $\varphi$ given by~\eqref{phi}. It follows that
$$
\exp\left(-rt\right) \Arrowvert y(t)\Arrowvert^{r}\leq\left(
3^{\frac{1}{\alpha}}\exp\left(-rs\right) +\varphi\right)\Arrowvert\phi\Arrowvert_{C}^{r}
+\psi \displaystyle\int_{0}^{t} \exp\left(-rs\right) \Arrowvert y(s)\Arrowvert^{r}\, \mathrm{d}s.
$$
Therefore, by applying Gr\"onwall's inequality, one gets
$$
\exp\left(-rt\right) \Arrowvert y(t)\Arrowvert^{r}\leq\left(
3^{\frac{1}{\alpha}}\exp\left(-rs\right) +\varphi\right)\Arrowvert\phi\Arrowvert_{C}^{r}
+\displaystyle\int_{0}^{t} \psi\left(3^{\frac{1}{\alpha}}\exp\left(-rs\right) 
+\varphi\right)\Arrowvert\phi\Arrowvert_{C}^{r}\exp\left(\psi(t-s)\right) \, \mathrm{d}s,
$$
and
\begin{equation}
\label{concl}
\Arrowvert y(t)\Arrowvert^{r}\leq
\dfrac{3^{\frac{1}{\alpha}}r+(3^{\frac{1}{\alpha}}\psi+r\varphi+\psi\varphi)
\exp\left((\psi+r) t\right) }{r+\psi}\Arrowvert\phi\Arrowvert_{C}^{r}.
\end{equation}
Hence, by combining  condition~\eqref{cnd4} and inequality~\eqref{concl}, 
we conclude with the finite time 
stability\index{finite time stability} of the GPFS~\eqref{system2}.
\end{proof}


\section{Applications}
\label{sec:4}

In this section, two numerical examples are provided 
to illustrate the effectiveness of the proposed results.

\begin{example}
\label{ex1} 
Let $\alpha=0.2$, $\mu=0.8$, $\tau=0.2$, $T=5$, 
$\omega(t)=\left(0.7\quad 0.7\right)^T$, 
and consider the homogeneous GPFS with time delay\index{time delay} given by 
\begin{equation}
\label{system2a}
\left\{
\begin{array}{ll}
{}^C \!D_{0}^{0.2,0.8}y(t)
=\exp\left(0.25t\right) \left[
\begin{pmatrix}
0.1&0\\
0&-0.2\\
\end{pmatrix}
\begin{pmatrix}
y_{1}(t)\\
y_{2}(t)\\
\end{pmatrix}
+
\begin{pmatrix}
-0.3&0\\
0&-0.2\\
\end{pmatrix}
\begin{pmatrix}
y_{1}(t-0.2)\\
y_{2}(t-0.2)\\	
\end{pmatrix}\right], 
& t\in[0,5],\\
y(t)=\left(0.7\quad 0.7\right)^T,  & t\in[-0.2,0].
\end{array}
\right.
\end{equation}	
According to system~\eqref{system2}, one has 
$$y=
\begin{pmatrix}
	y_{1}\\
	y_{2}\\	
\end{pmatrix}
\in \mathbb{R}^{2},~
A=\begin{pmatrix}
	0.1&0\\
	0&-0.2\\
\end{pmatrix}
\text{ and }
B=
\begin{pmatrix}
	-0.3&0\\
	0&-0.2\\
\end{pmatrix}.
$$

The aim is to verify condition~\eqref{cnd1} 
with respect to $\{c_{1}=1, c_{2}=8, T=5\}$. 

We have
$$
\Arrowvert \omega \Arrowvert_{C}=0.9899<c_{1},
$$	
$$
\Arrowvert A \Arrowvert= 0.2~ \text{ and }~ \Arrowvert B \Arrowvert=0.3.
$$
Therefore, condition~\eqref{cnd1} holds over the interval $[0, 5]$. 
Then, from Theorem~\ref{theob}, we deduce that system~\eqref{system2a} 
is finite time stable\index{finite time stable} with respect to $\{c_{1}=1, c_{2}=8, T=5\}$.	
\end{example}


\begin{example} 
Let $\alpha=0.4$, $\mu=0.5$, $\tau=0.3$, $T=4$,
$\omega(t)=\left(0.5\tanh(t)\quad 0\right)^T$  
and consider the two-state fractional delayed nonlinear GPFS
\begin{equation}
\label{system2exp}
\left\{
\begin{array}{ll}
{}^C\!D_{0}^{0.4,0.5}y(t)=\exp\left(-t\right) 
\left[Ay(t)+By(t-0.3)+0.02(\tanh(y(t))+\tanh(y(t-0.3)))\right], 
& t\in[0,3],\\			
y(t)=\left(0.5\tanh(t)\quad 0\right)^T,  
& t\in[-0.3,0],
\end{array}
\right.
\end{equation}
with 
$$
A=\begin{pmatrix}
-0.2&0\\
0&0.3\\
\end{pmatrix},
\quad
B=
\begin{pmatrix}
0&0\\
0.3&0.4\\
\end{pmatrix}
$$
and 
$$
f(t,y(t),y(t-\tau))=0.02\left(\tanh(y(t))+\tanh(y(t-\tau))\right).
$$
One needs to check condition~\eqref{cnd1} with respect 
to $\{c_{1}=0.4, c_{2}=4, T=3\}$. 	

The nonlinear term $f$ satisfies the generalized Lipschitz 
condition~\eqref{condLipch}  with the constant $L_{f}=0.02$. Also, one has 
$$
\Arrowvert \omega \Arrowvert_{C}=0.2266	<c_{1},
$$	
$$
\Arrowvert A \Arrowvert= 0.3~ \text{ and }~ \Arrowvert B \Arrowvert=0.5.
$$	
Then, condition~\eqref{cnd1} holds over $[0,3]$. 
Consequently, from Theorem~\ref{theob}, we deduce 
the finite time stability\index{finite time stability} 
of system~\eqref{system2exp} with respect to 
$\{\xi=0.4, \varepsilon=4, T=3\}$.
\end{example}


\section{Conclusion}
\label{sec:5}

We have dealt with the problem of finite time stability for 
generalized proportional fractional systems (GPFSs) 
with time delay. A sufficient condition, which ensures 
the finite time stability for homogeneous delayed GPFSs, based 
on the generalized Gr\"onwall inequality, was obtained. Also, an explicit 
delay-dependent criterion that allows the stability over a finite time interval 
of a class of nonlinear GPFSs is provided.
The effectiveness of the proposed 
criteria has been illustrated by numerical examples.


\section*{Funding}

Zitane and Torres are supported by The Center for Research and
Development in Mathematics and Applications (CIDMA)
through the Portuguese Foundation for Science and Technology 
(FCT -- Funda\c{c}\~{a}o para a Ci\^{e}ncia e a Tecnologia),
projects UIDB/04106/2020 (\url{https://doi.org/10.54499/UIDB/04106/2020})
and UIDP/04106/2020 (\url{https://doi.org/10.54499/UIDP/04106/2020}).



\printindex


\begin{thebibliography}{99}
	
\bibitem{Almeida}
R. Almeida, R. Agarwal, S. Hristova\ and\ D. O'Regan,
Quadratic Lyapunov functions for stability of the generalized
proportional fractional differential equations with
applications to neural networks, 
Axioms {\bf 10} (2022), no.~4, Art.~322, 14~pp.

\bibitem{Gronwall} 
J. Alzabut, T. Abdeljawad, F. Jarad\ and\ W. Sudsutad,
A Gronwall inequality via the generalized proportional 
fractional derivative with applications, 
J. Inequal. Appl. {\bf 2019} (2019), Paper No. 101, 12~pp.

\bibitem{Arthi}
G. Arthi \ and\  N. Brindha,
On finite-time stability of nonlinear fractional-order systems 
with impulses and multi-state time delays,
Results in Control and Optimization {\bf 2} (2021), Paper No.~100010, 7~pp.
	
\bibitem{Holder}
E. F. Beckenbach\ and\ R. Bellman, 
{\it Inequalities}, 
Springer-Verlag, Inc., New York, 1983.	

\bibitem{Bohner}
M. Bohner\ and\ S. Hristova, 
Stability for generalized Caputo proportional 
fractional delay integro-differential equations, 
Bound. Value Probl. {\bf 2022} (2022), Paper No. 14, 15~pp. 

\bibitem{Boucenna}
D. Boucenna, D. Baleanu, A. Ben Makhlouf \ and\ A. M. Nagyf, 
Analysis and numerical solution of the generalized proportional fractional Cauchy problem,
Applied Numerical Mathematics {\bf 167} (2021), 173--186.

\bibitem{DuJia}
F. Du \ and\ B. Jia ,
Finite-time stability of a class of nonlinear fractional delay difference systems,
Appl. Math. Lett. {\bf 98} (2019), 233--239.

\bibitem{Du}
F. Du \ and\ J. G. Lu,
New criterion for finite-time stability of fractional delay systems,
Appl. Math. Lett. {\bf 104} (2020), Paper No.~106248, 7~pp.	
	
\bibitem{Farman}
M. Farman, A. Shehzad, A. Akg\"{u}l, D. Baleanu  \ and\ M. De la Sen, 
Modeling and analysis of a measles epidemic model with the constant proportional Caputo operator,
Symmetry {\bf 15} (2023), no.~2, Paper No.~468, 22~pp.

\bibitem{Gao}
Y. Gao \ and\ N. Li,
Fractional order PD control of the Hopf bifurcation of HBV viral systems with multiple time delays,
Alexandria Engineering Journal {\bf 83} (2023), 18~pp.

\bibitem{GPF} 
F. Jarad, T. Abdeljawad\ and\ J. Alzabut, 
Generalized fractional derivatives generated by a class of local proportional derivatives, 
Eur. Phys. J. Spec. Top. {\bf 226} (2017), no.~16, 3457--3471.	

\bibitem{Jia}
Y. Jia, C. Lin \ and\ B. Chen, 
Finite-time stability of singular time-delay systems based on a new weighted integral inequality,
Journal of the Franklin Institute {\bf 360} (2023), no.~7, 5092--5103.

\bibitem{Kuczma}
M. Kuczma, 
{\it An introduction to the theory of functional equations and inequalities}, 
Birkh\"{a}user Verlag, Basel, 2009.

\bibitem{Laadjal}
Z. Laadjal, T. Abdeljawad\ and\ F. Jarad, 
On existence-uniqueness results for proportional fractional differential 
equations and incomplete gamma functions, 
Adv. Difference Equ. {\bf 2020} (2020), Paper No. 641, 16~pp.

\bibitem{LaadjalJ}
Z. Laadjal \ and\  F. Jarad,
Existence, uniqueness and stability of solutions for generalized proportional
fractional hybrid integro-differential equations with Dirichlet boundary
conditions,
AIMS Mathematics {\bf 8} (2023), no.~1, 1172--1194.

\bibitem{Lazarevi}
M. P. Lazarevi\'{c}\ and\ A. M. Spasi\'{c}, 
Finite-time stability analysis of fractional order time-delay systems: 
Gronwall's approach, 
Math. Comput. Modelling. {\bf 49} (2009), no.~3-4, 475--481.

\bibitem{LiSun}
Q. Li, D. Sun, H. Liu\ and\ W. Zhao, 
Stability and bifurcation control of a delayed fractional 
eco-epidemiological system with saturated incidence,
Results in Physics {\bf 54} (2023), Paper No.~107019, 13~pp.

\bibitem{Li}
M. Li \ and\ J. Wang,
Finite time stability of fractional delay differential equations,
Applied Mathematics Letters {\bf 64} (2017), 170--176.

\bibitem{Nguyena}
T. T. H. Nguyena, N. T. Nguyen \ and\ M. N. Tran, 
Global fractional Halanay inequalities approach to finite-time stability
of nonlinear fractional order delay systems,
J. Math. Anal. Appl. {\bf 525} (2023), no.~1, Paper No.~127145, 16~pp.

\bibitem{Sabatier}
J. Sabatier, O. P. Agrawal \ and\ J. A. Machado,
Advances in Fractional Calculus,
Dordrecht: Springer Netherlands, 2007.

\bibitem{YangWu}
X. Yang, X. Wu \ and\ Q. Song,
Caputo-Wirtinger integral inequality and its application to stability 
analysis of fractional-order systems with mixed time-varying delays,
Applied Mathematics and Computation {\bf 460} (2024), Paper No.~128303, 12~pp.

\bibitem{YangZhang}
Z. Yang, J. Zhang, J. Hu\ and\ J. Mei,
New results on finite-time stability for fractional-order neural networks
with proportional delay,
Neurocomputing {\bf 442} (2021), 327--336.

\bibitem{MyID:549} 
H. Zitane\ and\ D. F. M. Torres,
Finite time stability of tempered fractional systems with time delays,
Chaos Solitons Fractals {\bf 177} (2023), Art.~114265, 10~pp.
{\tt arXiv:2311.06608}

\end{thebibliography}
\end{document}